\newcommand{\Gr}{Gr\"obner }
\newcommand{\cI}{\cal{I}}
\newcommand{\Q}{\mathbb{Q}}
\newcommand{\Z}{\mathbb{Z}}
\newcommand{\cR}{\cal{R}}
\newcommand{\N}{\mathbb{N}}
\newcommand{\keywords}[1]{\par\addvspace\baselineskip
\noindent\keywordname\enspace\ignorespaces#1}
\begin{document}

\mainmatter

\title{A Strongly Consistent Finite Difference Scheme for Steady Stokes Flow and its Modified Equations }
\titlerunning{A Strongly Consistent Finite Difference Scheme for Steady Stokes Flow}

\author{Yury~A.~Blinkov\inst{1}
\and Vladimir P.~Gerdt\inst{2,3} \and Dmitry~A.~Lyakhov\inst{4} \and Dominik L.~Michels~\inst{4}}
\institute{
Saratov State University, Saratov, 413100, Russian Federation\\
\email{BlinkovUA@info.sgu.ru}\\
\and
Joint Institute for Nuclear Research, Dubna, 141980, Russian Federation\\
\and
Peoples' Friendship University of Russia, Moscow, 117198, Russian Federation\\
\email{Gerdt@jinr.ru}\\
\and
King Abdullah University of Science and Technology, Thuwal, 23955-6900, Kingdom of Saudi Arabia\\
\email{\{Dmitry.Lyakhov,Dominik.Michels\}@kaust.edu.sa}\\
}
\authorrunning{Yu.~A.~Blinkov et al.}
\toctitle{Lecture Notes in Computer Science}
\tocauthor{Yu.~A.~Blinkov et al.}


\maketitle

\begin{abstract}
We construct and analyze a strongly consistent second-order finite difference scheme for the steady two-dimensional Stokes flow.
The pressure Poisson equation is explicitly incorporated into the scheme. Our approach suggested by the first two authors
is based on a combination of the finite volume method, difference elimination, and numerical integration. We make use of the
techniques of the differential and difference Janet/\Gr bases. In order to prove strong consistency of the generated scheme
we correlate the differential ideal generated by the polynomials in the Stokes equations with the difference ideal generated
by the polynomials in the constructed difference scheme. Additionally, we compute the modified differential system of the
obtained scheme and analyze the scheme's accuracy and strong consistency by considering this system. An evaluation of our scheme against the established marker-and-cell method is carried out.

\keywords{computer algebra, difference elimination, finite difference approximation, Janet basis, modified equations, Stokes flow, strong consistency.}
\end{abstract}

\section{Introduction}
\label{sec:1}
In this paper, we consider the two-dimensional flow of an incompressible fluid described by the following system of partial differential equations (PDEs):
\begin{equation}
\label{Stokes}
\left\lbrace
\begin{array}{rl}
F^{(1)}:=& u_x+v_y=0\,,\\[4pt]
F^{(2)}:=& p_x - \frac{1}{\mathrm{Re}}\Delta\,u - f^{(1)}=0\,,\\[4pt]
F^{(3)}:=& p_y - \frac{1}{\mathrm{Re}}\Delta\,v-f^{(2)}=0\,.
\end{array}
\right.
\end{equation}
Here the velocities $u$ and $v$, the pressure $p$, and the external forces $f^{(1)}$ and $f^{(2)}$ are functions in $x$ and $y$; $\mathrm{Re}$ is the Reynolds number and $\Delta:=\partial_{xx}+\partial_{yy}$ is the Laplace operator.

A flow that is governed by these equations is denoted in the literature as a Stokes flow or a creeping flow. Correspondingly, the PDE system~\eqref{Stokes} is called a Stokes system.  It approximates the Navier--Stokes system for a two-dimensional incompressible steady flow when ${\mathrm{Re}} \ll 1$. The last condition makes the nonlinear inertia terms in the Navier--Stokes system much smaller then the viscous forces (cf. \cite{Milne-Thompson'68}, Sect.~22$\cdot$11), and neglecting of the nonlinear terms results in~Eqs.~\eqref{Stokes}. The fundamental mathematical theory of the Stokes flow is e.g.~presented in~\cite{KohrPop'04}.

Our first aim is to construct, for a uniform and orthogonal grid, a finite difference scheme for the governing system~\eqref{Stokes} which contains a discrete version of the pressure Poisson equation and whose algebraic properties are strongly consistent (or s-consistent, for brevity)~\cite{GR'10,G'12} with those of~Eqs.~\eqref{Stokes}. For this purpose, we use the approach proposed in~\cite{GBM'06} based on a combination of the finite volume method, numerical integration, and difference elimination. For the generated scheme we apply the algorithmic criterion to verify its s-consistency. The last criterion was designed in~\cite{GR'10} for linear PDE systems and then generalized in~\cite{G'12} to polynomially nonlinear systems. The computational experiments done in papers~\cite{ABGLS'13,ABGLS'17} with the Navier--Stokes equations demonstrated a substantial superiority in numerical behavior of s-consistent schemes over s-inconsistent ones.

The linearity of Eqs.~\eqref{Stokes} not only makes the construction and analysis of its numerical solutions much easier than in the case of the Navier--Stokes equations, but also admits a fully algorithmic generation of difference schemes for Eqs.~\eqref{Stokes} and their s-consistency verification. To perform related computations we use two Maple packages implementing the involutive algorithm (cf.~\cite{G'05}) for the computation of Janet and \Gr bases: the package {\sc Janet}~\cite{Maple-Janet'03}  for linear  differential systems and the  package {\sc LDA}~\cite{GR'12} (\underline{L}inear \underline{D}ifference \underline{A}lgebra) for linear difference systems.

Our second aim is to compute a modified differential system of the constructed difference scheme, i.e.,~modified Stokes flow, and to analyse the accuracy and consistency of the scheme via this differential system. Nowadays the method of modified equations suggested in~\cite{Shokin'83} is widely used~(see~\cite{GV'96}, Ch.~8 and \cite{Moin'10}, Sect.~5.5) in studying difference schemes.  The method provides a natural and unified platform to study such basic properties of the scheme as order of approximation, consistency, stability, convergence, dissipativity, dispersion, and invariance. However, as far as we know, the methods for the computation of modified equations have not been extended yet to non-evolutionary PDE systems. We show how the extension can be done for our scheme by applying the technique of differential Janet/\Gr bases.

The present paper is organized as follows. In Section~\ref{sec:scheme}, we generate for~Eqs.~\eqref{Stokes} a difference scheme by applying the approach of paper~\cite{GBM'06}. In Section~\ref{sec:consistency}, we show that our scheme is s-consistent and demonstrate s-inconsistency of another scheme obtained by a tempting compactification of our scheme. The computation of a modified Stokes system for our s-consistent scheme is described in Section~\ref{ModifiedEquation}. Here, we also show by the example of the s-inconsistent scheme of Section~\ref{sec:consistency} how the modified Stokes system detects the s-inconsistency.  Finally, a numerical benchmark against the marker-and-cell method is presented in Section~\ref{sec:Numerics} and some concluding remarks are given in Section~\ref{Conclusion}.

\section{Difference Scheme Generation for Stokes Flow}
\label{sec:scheme}
We consider the orthogonal and uniform solution grid with the grid spacing $h$ and apply the approach of paper~\cite{GBM'06} to generate a difference scheme for Eqs.~\eqref{Stokes}.\\

{\bf Step 1. Completion to involution} (we refer to~\cite{Seiler'10} and to the references therein for the theory of involution). We select the lexicographic  POT ({\underline{P}osition \underline{O}ver \underline{T}erm)~\cite{AdamsLoustaunau'94} ranking with
\begin{equation}\label{ranking1}
x\succ y\,,\quad u\succ v\succ p\succ f^{(1)}\succ f^{(2)}\,.
\end{equation}
Then the package {\sc Janet}~\cite{Maple-Janet'03} outputs the following {\em Janet involutive form} of Eqs.~\eqref{Stokes} which is the {\em minimal reduced differential \Gr basis form}:
\begin{equation}\label{InvSys}
\left\lbrace
\begin{array}{rl}
F^{(1)}:= &\ \underline{u_x}+v_y=0\,,\\[4pt]
F^{(2)}:= &\ p_x-\frac{1}{\mathrm{Re}}\left(\underline{u_{yy}}-v_{xy}\right)
 - f^{(1)}=0\,, \\[4pt]
F^{(3)}:= &\ p_y -\frac{1}{\mathrm{Re}}\left(\underline{v_{xx}} + v_{yy}\right) - f^{(2)}=0\,, \\[4pt]
F^{(4)}:= &\  \underline{p_{xx}} + p_{yy} - f^{(1)}_x - f^{(2)}_y=0\,.
\end{array}
\right.
\end{equation}
We underlined the {\em leaders}, i.e.,~the highest ranking partial derivatives occurring in Eqs.~\eqref{InvSys}.
$F^4$ is the {\em pressure Poisson equation} which, being the integrability condition for system~\eqref{Stokes}, is expressed in terms of its left-hand sides as
\begin{equation}\label{IntCon}
F^{(4)}:=F^{(2)}_x + F^{(3)}_y + \frac{1}{\mathrm{Re}}\left( F^{(1)}_{xx} + F^{(1)}_{yy}\right)  = p_{xx} + p_{yy} - f^{(1)}_x - f^{(2)}_y\,.
\end{equation}

\begin{remark}\label{Rem1} The differential polynomial $F^{(2)}$ in Eqs.~\eqref{InvSys} is $F^{(2)}$ in Eqs.~\eqref{Stokes} reduced modulo the continuity equation $F^{(1)}$.
\end{remark}

{\bf Step 2. Conversion into the integral form}. We choose the following integration contour $\Gamma$ as a ``control volume''
\begin{figure}[h]
    \centering
    \includegraphics[scale=0.8]{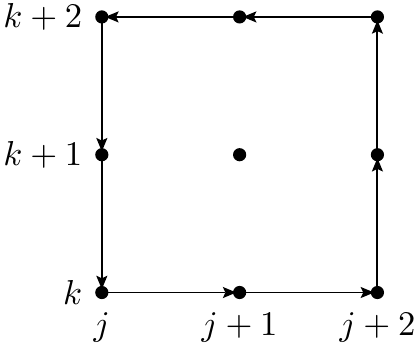}
    \caption{Integration contour $\Gamma$ (stencil $3\times 3$).}
    \label{IntegrationContour}
\end{figure}
and rewrite equations $F^{(1)},F^{(2)}$, and $F^{(3)}$ into the equivalent {\em integral form}
\begin{equation}\label{IntForm}
\left\lbrace
\begin{array}{rl}
  \begin{split}
& \oint _{\Gamma} \!-v \,dx + u \,dy = 0\,,\\[4pt]
& \oint _{\Gamma } \!\frac{1}{\mathrm{Re}}u_y \,dx + \left( p - \frac{1}{\mathrm{Re}}u_x\right)  \,dy -
\iint _{\Omega } f^{(1)}\! \,d x\,dy=0 \,, \\[4pt]
& \oint _{\Gamma} \! -\left(p - \frac{1}{ \mathrm{Re}}v_y\right)  \,dx - \frac{1}{\mathrm{Re}}v_x \,dy -
\iint _{\Omega } f^{(2)}\! \,d x\,dy=0 \,, 
  \end{split}
\end{array}
\right.
\end{equation}
where $\Omega$ is the internal area of the contour $\Gamma$.

It should be noted that we use in Eqs.~\eqref{IntForm} the original form of $F^{(2)}$ given in Eqs.~\eqref{Stokes} (see Remark~\ref{Rem1}) since we want to preserve at the discrete level the symmetry of system~\eqref{Stokes} under the swap transformation
\begin{equation}\label{SymTrans}
  \{x,u,f^{(1)}\} \longleftrightarrow \{y,v,f^{(2)}\}\,.
\end{equation}

{\bf Step 3. Addition of integral relations for derivatives.} We add to system~\eqref{IntForm} the {\em exact integral relations} between the partial derivatives of velocities and the velocities themselves:
\begin{equation}\label{relations}
\left\lbrace
\begin{array}{l}
\int \limits_{x_j}^{x_{j+1}} \! \! \! u_x dx = u(x_{j+1}, y) -
u(x_{j}, y)\,,
\quad
 \int \limits_{y_k}^{y_{k+1}} \! \! \! u_y dy = u(x, y_{k+1}) -
u(x, y_{k})\,, \\[0.6cm]
 \int \limits_{x_j}^{x_{j+1}} \! \! \! v_x dx = v(x_{j+1}, y) -
v(x_{j}, y)\,,
\quad
 \int \limits_{y_k}^{y_{k+1}} \! \! \! v_y dy = v(x, y_{k+1}) -
v(x, y_{k})\,.
\end{array}
\right.
\end{equation}

{\bf Step 4. Numerical evaluation of integrals.} We apply the midpoint rule for the contour integration in Eqs.~\eqref{IntForm}, the trapezoidal rule for the integrals \eqref{relations} and approximate the double integrals as
$$f^{(1,2)}_{i+1,k+1}4h^2\,,$$
where $h$ is the step of a square grid in the $(x,y)$ plane.

As a result, we obtain the difference equations for the grid functions
\[
   u_{j,\,k}\approx u(jh,kh)\,,\ v_{j,\,k}\approx v(jh,kh)\,,\ p_{j,\,k}\approx p(jh,kh)\,,\ f^{(1,2)}_{j,\,k}\approx f^{(1,2)}(jh,\,kh)
\]
approximating functions $u(x,y),v(x,y),p(x,y),f^{(1)}(x,y),f^{(2)}(x,y)$, and the grid functions approximating partial derivatives
\[
\left\lbrace
\begin{array}{l}
   {u_x}_{j,\,k}\approx u_x(jh,kh)\,,\quad {u_y}_{j,\,k}\approx u_y(jh,kh)\,,\\[10pt]
   {v_x}_{j,\,k}\approx v_x(jh,kh)\,,\,\,\quad {v_y}_{j,\,k}\approx v_y(jh,kh)\,,
\end{array}
\right.
\]
where $j,k\in \Z$:

\begin{equation}\label{DS}
\left\lbrace
\begin{split}
& \left(u_{j+2,\,k+1} -u_{j,\,k+1}\right) 2h + \left( v_{j+1,\,k+2}-v_{j+1,\,k}\right) 2h =0\, ,\\[4pt]
& \frac{1}{\mathrm{Re}}\left({u_y}_{j+1,\,k} -
{u_y}_{j+1,\,k+2}\right)2h + \left(p_{j+2,\,k+1}-\frac{1}{\mathrm{Re}}{u_x}_{j+2,\,k+1}\right)2h{}\\[4pt]
& \hfill {} -
\left(p_{j,\,k+1}-\frac{1}{\mathrm{Re}}{u_x}_{j,\,k+1}\right)2h
-4 f^{(1)}_{j+1,\,k+1}h^2=0\,, \\[4pt]
& -\left(\left(p_{j+1,\,k}-\frac{1}{\mathrm{Re}}{v_y}_{j+1,\,k}\right)-
\left(p_{j+1,\,k+2}-\frac{1}{\mathrm{Re}}{v_y}_{j+1,\,k+2}\right)\right)2h {}\\[4pt]
& \hfill + \left(-\frac{1}{\mathrm{Re}}{v_x}_{j+2,\,k+1} +
\frac{1}{\mathrm{Re}}{v_x}_{j,\,k+1}\right)2h\
-4 f^{(2)}_{j+1,\,k+1}h^2=0 \,,\\[4pt]
& \dfrac{{u_x}_{j+1,\,k} + {u_x}_{j,\,k}}{2}h - u_{j+1,\,k}+u_{j,\,k}=0\,,\\[4pt]
& \dfrac{{v_x}_{j+1,\,k} + {v_x}_{j,\,k}}{2}h - v_{j+1,\,k}+v_{j,\,k}=0\,,\\[4pt]
& \dfrac{{u_y}_{j,\,k+1} + {u_y}_{j,\,k}}{2}h - u_{j,\,k+1}+u_{j,\,k}=0\,,\\[4pt]
& \dfrac{{v_y}_{j,\,k+1} + {v_y}_{j,\,k}}{2}h - v_{j,\,k+1}+v_{j,\,k}=0\,.
\end{split}
\right.
\end{equation}

{\bf Step 5. Difference elimination of derivatives.} To eliminate the grid functions $u_x,u_y,v_x,v_y$ for the partial derivatives of the velocities, we construct a difference Janet/\Gr basis form of the set of linear difference polynomials in left-hand sides of~Eqs.~\eqref{DS} with the Maple package {\sc LDA}~\cite{GR'10} for the POT lexicographic ranking which is the difference analogue of the differential ranking used on Step~1:
\begin{equation}\label{ranking}
j\succ k,\quad u\succ v\succ p\succ f^{(1)}\succ f^{(2)}\,.
\end{equation}

The output of the {\sc LDA} includes four difference polynomials not containing the grid functions $u_x,u_y,v_x,v_y$. These polynomials comprise a difference scheme. Being interreduced, this scheme does not reveal a desirable discrete analogue of symmetry under the transformation~\eqref{SymTrans}. Because of this reason, we prefer the following redundant but symmetric form of the scheme:
\begin{equation}\label{J-Basis}
\left\lbrace
\begin{array}{rl}
\tilde{F}^{(1)}:=& \dfrac{u_{j+2,\,k+1} - u_{j,\,k+1}}{2h} + \dfrac{v_{j+1,\,k+2}-v_{j+1,\,k}}{2h}=0\, ,\\[8pt]
\tilde{F}^{(2)}:=& \dfrac{p_{j+2,\,k+1}-p_{j,\,k+1}}{2h} -
\dfrac{1}{\mathrm{Re}}\Delta_1\left(u_{j,k}\right)- f^{(1)}_{j+1,\,k+1}=0\,,\\[8pt]
\tilde{F}^{(3)}:=& \dfrac{p_{j+1,\,k+2}-p_{j+1,\,k}}{2h} -
\dfrac{1}{\mathrm{Re}}\Delta_1\left(v_{j,k}\right) - f^{(2)}_{j+1,\,k+1}=0\,,\\[8pt]
\tilde{F}^{(4)}:=& \Delta_2\left(p_{j,k}\right)
- \dfrac{f^{(1)}_{j+3,\,k+2} - f^{(1)}_{j+1,\,k+2}}{2h} - \dfrac{f^{(2)}_{j+2,\,k+3}-f^{(2)}_{j+2,\,k+1}}{2h}=0\,,
\end{array}
\right.
\end{equation}
where $\Delta_1$ and $\Delta_2$ are discrete versions of the Laplace operator acting on a grid function $g_{j,\,k}$ as
\begin{eqnarray}
 &&\Delta_1\left( g_{j,\,k}\right):=\dfrac{g_{j+2,\,k+1} + g_{j+1,\,k+2} -4 g_{j+1,\,k+1} + g_{j+1,\,k} + g_{j,\,k+1}}{h^2}\,,\\
  &&\Delta_2\left( g_{j,\,k}\right):=\dfrac{g_{j+4,\,k+2} + g_{j+2,\,k+4} -4 g_{j+2,\,k+2} + g_{j+2,\,k} + g_{j,\,k+2}}{4h^2}\,.
\end{eqnarray}

\begin{remark}\label{Rem2}
The difference equation $\tilde{F}^{(4)}$ of the system~\eqref{J-Basis} can also be obtained (cf.~\cite{GB'09}) from the integral form of $F^4$ in Eqs.~\eqref{InvSys}--\eqref{IntCon} with the contour illustrated in Fig.~\ref{IntegrationContour} by using the midpoint rule for the contour integration of the $p_x$ and $p_y$ as well as for evaluation of the additional integrals
\begin{equation}\label{p-relations}
\int \limits_{x_j}^{x_{j+2}} \! \! \! p_x dx = p(x_{j+2}, y) -
p(x_{j}, y)\,,
\quad
 \int \limits_{y_k}^{y_{k+2}} \! \! \! p_y dy = p(x, y_{k+2}) -
p(x, y_{k})\,,
\end{equation}
and the trapezoidal rule for the contour integration of $f^{(1)}$ and $f^{(2)}$.
\end{remark}

The difference polynomials~\eqref{J-Basis} approximate those in Eqs.~\eqref{InvSys}, and the correspondence between differential and difference Janet/\Gr bases is a consequence of our choice of the differential~\eqref{ranking1} and difference~\eqref{ranking} rankings.

\section{Consistency Analysis}
\label{sec:consistency}

Let ${\cR}={\Q}(\mathrm{Re},h)[u,v,p,f^{(1)},f^{(2)}]$ be the {\em ring of differential polynomials} over the field of rational functions in $\mathrm{Re}$ and $h$. We consider the functions describing the Stokes flow~\eqref{Stokes} as {\em differential indeterminates} and their grid approximations as {\em difference indeterminates}. Respectively, we denote by ${\tilde{\cR}}$ the {\em difference polynomial ring} whose elements are polynomials in the grid functions with the right-shift operators $\sigma_1$ and $\sigma_2$ acting as translations, for example,
\begin{equation}\label{shifts}
    \sigma_1\circ u_{j,\,k}=u_{j+1,\,k}\,,\quad \sigma_2\circ u_{j,\,k}=u_{j,\,k+1}\,.
\end{equation}
We denote by ${\cI}:=\langle F^{(1)},F^{(2)},F^{(3)}\rangle\subset {\cR}$ the {\em differential ideal} generated by the set of left-hand sides in~\eqref{Stokes} and by $\tilde{\cI}:=\langle \tilde{F}^{(1)},\tilde{F}^{(2)},\tilde{F}^{(3)},\tilde{F}^{(4)}\rangle \subset \tilde{\cR}$ the {\em difference ideal} generated by the left-hand sides of Eqs.~\eqref{J-Basis}.

The elements in ${\cI}$ vanish on solutions of the Stokes flow~\eqref{Stokes} and those in $\tilde{\cI}$ vanish on solutions of~\eqref{J-Basis}. We refer to an element in ${\cI}$ (respectively, in $\tilde{\cI}$) as to a {\em consequence} of Eqs.~\eqref{Stokes} (respectively, of Eqs.~\eqref{J-Basis}).

\begin{definition}{\em\cite{GR'10}}
We shall say that a difference equation $\tilde{F}=0$ {\em implies the differential equation} $F=0$ and write
$\tilde{F}\rhd F$ when the Taylor expansion about a grid point yields
\begin{equation}
\tilde{F}\xrightarrow[h\rightarrow 0]{} F\cdot h^k + O(h^{k+1}),\ k\in \Z_{\geq 0}\,. \label{def-imply}
\end{equation}
\end{definition}
It is clear that to approximate Eqs.~\eqref{InvSys}, the scheme~\eqref{J-Basis} must be pairwise consistent with the involutive differential form~\eqref{InvSys}. We call this sort of consistency {\em weak consistency}.

\begin{definition}{\em\cite{GR'10}}
 A difference polynomial set $\{\tilde{F}^{(1)},\tilde{F}^{(2)},\tilde{F}^{(3)},\tilde{F}^{(4)}\}$~ is {\em weakly consistent} or {\em w-consistent} with differential system~\eqref{InvSys} if
\begin{equation}
   (\,\forall\ 1\leq i\leq 4\,)\ [\,\tilde{F}^{(i)}\rhd F^{(i)}\,]\,.
\end{equation}
\label{w-consistency}
\end{definition}

The following definition establishes the consistency interrelation between the differential and difference ideals generated by Eqs.~\eqref{Stokes} and Eqs.~\eqref{J-Basis}, respectively. If such a consistency holds, then it provides a certain inheritance of algebraic properties of Stokes flow by the difference scheme.

\begin{definition}{\em\cite{G'12}}
A finite difference approximation $\tilde{F}:=\{\tilde{F}^{(1)},\ldots,\tilde{F}^{(m)}\}$ to~\eqref{Stokes} is {\em strongly consistent} or {\em s-consistent} with Stokes flow~\eqref{Stokes} if
\begin{equation}
(\forall \tilde{f}\in \llbracket \tilde{F}\rrbracket)\  (\exists
f \in {\cI}) \ \ [\tilde{f}\rhd f]\,, \label{s-cond}
\end{equation}
where $\llbracket \tilde{F}\rrbracket$ is a {\em perfect difference ideal}~\cite{Levin'08} generated by the elements in the difference approximation.
\label{def-3}
\end{definition}

\begin{theorem}{\em\cite{G'12}}
The s-consistency condition~\eqref{s-cond} holds if and only if  a \Gr basis $\tilde{G}$ of $\tilde{\cI}$ satisfies
\begin{equation}
(\,\forall \tilde{g}\in \tilde{G}\,)\ (\,\exists f\in \langle F \rangle\,)\  [\,\tilde{g}\rhd f\,]\,. \label{s-consistency}
\end{equation}
\label{ThmSC}
\end{theorem}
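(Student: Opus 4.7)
The plan is to prove the two implications separately.

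\emph{Forward direction.} Since $\tilde G$ generates $\tilde\cI$ as a difference ideal, one has $\tilde G\subseteq\tilde\cI\subseteq\llbracket\tilde F\rrbracket$, so every $\tilde g\in\tilde G$ is in particular an element of $\llbracket\tilde F\rrbracket$. Applying the hypothesis \eqref{s-cond} with $\tilde f=\tilde g$ immediately yields a differential $f\in\cI=\langle F\rangle$ with $\tilde g\rhd f$, which is precisely \eqref{s-consistency}. This direction is essentially a bookkeeping step.

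\emph{Reverse direction.} This is the substantive half, and I plan to handle it in two stages. In the first stage I would establish the implication for every $\tilde f\in\tilde\cI$ (rather than the larger perfect closure). By the Gröbner basis property, any such element admits a representation $\tilde f=\sum_i c_i\,\sigma_1^{\alpha_i}\sigma_2^{\beta_i}\tilde g_i$ with $\tilde g_i\in\tilde G$ and $c_i\in\tilde\cR$. Substituting grid values of smooth functions and Taylor-expanding about a common base point, each shift $\sigma_1^{\alpha_i}\sigma_2^{\beta_i}$ differs from the identity by $O(h)$, so $\sigma_1^{\alpha_i}\sigma_2^{\beta_i}\tilde g_i\rhd f_i$ whenever $\tilde g_i\rhd f_i$. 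Summing and tracking the leading nonvanishing order in $h$ then produces a differential polynomial which is an $\cR$-linear combination of the $f_i$ together with their partial derivatives; the derivatives arise precisely when intermediate cancellations raise the order of vanishing in $h$. Since $\cI$ is closed under differentiation and $\cR$-multiplication, the resulting polynomial lies in $\cI$, as required.

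In the second stage I would lift this from $\tilde\cI$ to its perfect closure $\llbracket\tilde F\rrbracket$, arguing inductively along the standard transfinite construction: at each step one adjoins a $\tilde f$ for which a product of shifts and powers, $\prod_\ell (\sigma_1^{\gamma_\ell}\sigma_2^{\delta_\ell}\tilde f)^{e_\ell}$, already lies in the current ideal. Using the multiplicativity of Taylor expansion together with the linearity of the Stokes system, a differential consequence for this product should force a differential consequence for $\tilde f$ itself. This step is the main obstacle: the difference-algebraic notion of perfect ideal is strictly stronger than radical, and one must check carefully that passage from products and shifts of $\tilde f$ back to $\tilde f$ commutes with the $h\to 0$ limit and preserves membership in the differential ideal $\cI$, exploiting the linear structure of the generators $F^{(1)},F^{(2)},F^{(3)}$ in~\eqref{Stokes}. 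Once this verification is in place, the base case from the first stage together with the inductive step yields \eqref{s-cond} for every element of $\llbracket\tilde F\rrbracket$, completing the equivalence.
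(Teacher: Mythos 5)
First, a point of reference: the paper does not actually prove Theorem~\ref{ThmSC}; it is imported verbatim from \cite{G'12} (with the linear antecedent in \cite{GR'10}), so there is no in-paper proof to measure your attempt against. Your forward direction is correct and is indeed pure bookkeeping: $\tilde G\subseteq\tilde{\cI}\subseteq\llbracket\tilde F\rrbracket$, so \eqref{s-cond} applies to each $\tilde g$ directly.

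The reverse direction has a genuine gap in your first stage, and the paper itself supplies a counterexample to the argument as written. You take a representation $\tilde f=\sum_i c_i\,\sigma_1^{\alpha_i}\sigma_2^{\beta_i}\tilde g_i$ and claim that the leading nonvanishing order of its Taylor expansion is an ${\cR}$-linear combination of the $f_i$ and their derivatives, hence lies in ${\cI}$. This fails exactly when the leading orders cancel: the subleading Taylor coefficients $r^{(i)}_m$ of the individual generators (cf.~\eqref{ExpG}) are in general \emph{not} elements of ${\cI}$ --- the nonzero $h^2$ terms that survive the reduction in \eqref{TaylorExpRed} show precisely this --- and after cancellation it is these coefficients, not derivatives of the $f_i$, that determine the implied differential polynomial. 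Moreover, your argument uses nothing about $\tilde G$ beyond the existence of \emph{some} representation, which any generating set provides. Applied to the generating set $\{\tilde F^{(1)},\tilde F^{(2)},\tilde F^{(3)},\tilde F^{(4)}_1\}$ of Section~\ref{sec:consistency}, each of whose elements does imply a member of ${\cI}$, your reasoning would conclude that every element of $\tilde{\cI}_1$ implies a member of ${\cI}$, contradicting Proposition~\ref{proposition} and the existence of the consequences \eqref{incon-eq} lying outside the differential ideal. What is actually required is a \emph{standard} representation (leading difference monomials of the summands bounded by that of $\tilde f$) together with a lifting-of-syzygies argument: any cancellation of leading $h$-orders must be traced back to the S-polynomial relations among the $\tilde g_i$, which reduce to zero precisely because $\tilde G$ is a \Gr basis --- this is where the hypothesis enters, and it is absent from your proof. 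Your second stage (passage from $\tilde{\cI}$ to the perfect closure $\llbracket\tilde F\rrbracket$) is honestly flagged as incomplete, and it is: one needs that if a product of shifts and powers of $\tilde f$ implies an element of ${\cI}$ then $\tilde f$ does too, which rests on the leading Taylor coefficient of a product being the product of the leading coefficients together with primality (or passage to the radical) of ${\cI}$; none of this is carried out. As it stands, the proposal establishes only the trivial direction.
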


\begin{corollary}\label{corollary}
The difference scheme~\eqref{J-Basis} is s-consistent with the Stokes system~\eqref{Stokes}.
\end{corollary}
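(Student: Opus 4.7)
The plan is to apply Theorem~\ref{ThmSC}. Because the {\sc LDA} output of Step~5 is by construction a difference Janet basis of $\tilde{\cI}$ for the ranking~\eqref{ranking}, and every involutive basis is a \Gr basis, a \Gr basis $\tilde{G}$ of $\tilde{\cI}$ is already at hand; the symmetric presentation~\eqref{J-Basis} merely retains one redundant polynomial in order to preserve the swap symmetry~\eqref{SymTrans}, which does not change the ideal. I can therefore check the implication $\tilde{g}\rhd f$ directly on the four polynomials $\tilde{F}^{(1)},\ldots,\tilde{F}^{(4)}$ listed in~\eqref{J-Basis}.

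Next I would verify, one by one, that each $\tilde{F}^{(i)}$ implies an element of $\cI=\langle F^{(1)},F^{(2)},F^{(3)}\rangle$. Expanding the two centred first differences in $\tilde{F}^{(1)}$ by Taylor series about $(x_{j+1},y_{k+1})$ gives $F^{(1)}+O(h^2)$. For $\tilde{F}^{(2)}$ and $\tilde{F}^{(3)}$ I would use that Step~2 kept the \emph{original} $F^{(2)},F^{(3)}$ of~\eqref{Stokes} in the integral formulation~\eqref{IntForm}, that the midpoint and trapezoidal quadratures of Step~4 are second-order accurate, and that the rotated five-point operator $\Delta_1$ agrees with $\partial_{xx}+\partial_{yy}$ up to $O(h^2)$; hence $\tilde{F}^{(i)}=F^{(i)}+O(h^2)$ with $F^{(i)}$ read from~\eqref{Stokes}, which belongs to $\cI$ by definition.

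For $\tilde{F}^{(4)}$ I would expand $\Delta_2(p_{j,k})$ together with the centred differences of $f^{(1)},f^{(2)}$ about $(x_{j+2},y_{k+2})$ and read off $p_{xx}+p_{yy}-f^{(1)}_x-f^{(2)}_y+O(h^2)$, i.e.\ $F^{(4)}+O(h^2)$ with $F^{(4)}$ as in~\eqref{InvSys}. The identity~\eqref{IntCon} exhibits this $F^{(4)}$ as a differential consequence of $F^{(1)},F^{(2)},F^{(3)}$, so $F^{(4)}\in\cI$. The four verified implications $\tilde{F}^{(i)}\rhd F^{(i)}$ then establish condition~\eqref{s-consistency}, and Theorem~\ref{ThmSC} delivers the corollary.

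The step requiring most care is the $\tilde{F}^{(4)}$ expansion: $\Delta_2$ lives on a doubled stencil and the forcing terms enter through a different quadrature than in $\tilde{F}^{(2)},\tilde{F}^{(3)}$ (see Remark~\ref{Rem2}), so I must confirm that the potential $h^{-2}$ and $h^{-1}$ contributions cancel and that the $h^0$ coefficient reproduces $F^{(4)}$ exactly. All other expansions are routine centred-difference estimates, and if the symmetric redundancy in~\eqref{J-Basis} raises any concern it can be removed by one interreduction before the check without affecting the ideal or the conclusion.
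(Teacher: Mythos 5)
Your proposal is correct and follows essentially the same route as the paper: observe that the four polynomials of~\eqref{J-Basis} form a difference Janet/\Gr basis of the ideal they generate (a by-product of the elimination step and checkable with {\sc LDA}), verify $\tilde{F}^{(i)}\rhd F^{(i)}$ for $i=1,\dots,4$ by central-difference Taylor expansion, note that $F^{(4)}\in\cI$ via the integrability identity~\eqref{IntCon}, and invoke Theorem~\ref{ThmSC}. The paper's proof is terser (it delegates the basis property and the implications to the software), whereas you spell out the expansions, but the logical skeleton is identical.
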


\begin{proof}
By its construction, the set of difference polynomials in Eqs.~\eqref{J-Basis} is a Janet/\Gr basis of the elimination ideal $\tilde{\cI}_0\cap {\cR}$ where $\tilde{\cI}_0$ is the difference ideal generated by the polynomials in Eqs.~\eqref{DS} (cf.~\cite{AdamsLoustaunau'94}, Thm.~2.3.4). The same  set is also a Janet/\Gr basis for the ideal $\langle \tilde{F}^{(1)},\tilde{F}^{(2)},\tilde{F}^{(3)}\rangle$ and for the same POT ranking with $j\succ k$ and $u\succ v\succ p\succ f^{(1)}\succ f^{(2)}$. It is readily verified with the {\sc LDA} package. Furthermore, it is easy to see that
\begin{equation}\label{implication}
   \tilde{F}^{(i)} \rhd {F}^{(i)},\quad (i=1\div4)
\end{equation}
where ${F}^{(i)}$ are differential polynomials in Eqs.~\eqref{InvSys}. \hfill{$\Box$}
\end{proof}

\begin{remark}
For the computation of the image in mapping~\eqref{implication} one can use the command {\em ContinuousLimit} of the package {\sc LDA}.
\end{remark}

\noindent
It is clear that s-consistency of Eqs.\,\eqref{J-Basis} with~Eqs.~\eqref{Stokes} implies w-consistency. But the converse is not true. For the numerical simulation of the Stokes flow it is tempting to replace $\tilde{F}^{(4)}$ in Eqs.~\eqref{J-Basis} with a more compact discretization
\begin{equation}\label{dpe}
 \tilde{F}^{(4)}_1:= \Delta_1\left(p_{j,k}\right)
- \dfrac{f^{(1)}_{j+2,\,k+1} - f^{(1)}_{j,\,k+1}}{2h} - \dfrac{f^{(2)}_{j+1,\,k+2}-f^{(2)}_{j+1,\,k}}{2h}=0\,.
\end{equation}
Although this substitution preserves w-consistency since
\begin{equation}
 \tilde{F}^{(4)}_1\rhd F^{(4)}\,,
\end{equation}
the scheme $\{\,\tilde{F}^{(1)},\tilde{F}^{(2)},\tilde{F}^{(3)},\tilde{F}^{(4)}_1\,\}$ is not s-consistent.
\begin{proposition}\label{proposition}
 The difference scheme
 $\{\tilde{F}^{(1)},\tilde{F}^{(2)},\tilde{F}^{(3)},\tilde{F}^{(4)}_1\}$
 is s-inconsistent.
\end{proposition}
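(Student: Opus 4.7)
The plan is to apply Definition~\ref{def-3} directly and exhibit a single difference polynomial $P\in\tilde{\cI}_1:=\langle \tilde{F}^{(1)},\tilde{F}^{(2)},\tilde{F}^{(3)},\tilde{F}^{(4)}_1\rangle\subseteq\llbracket\tilde{F}\rrbracket$ whose (uniquely determined) continuous limit does not lie in $\cI$. The key remark is that the proof of Corollary~\ref{corollary} already establishes $\tilde{F}^{(4)}\in\langle\tilde{F}^{(1)},\tilde{F}^{(2)},\tilde{F}^{(3)}\rangle$, so both $\tilde{F}^{(4)}$ and $\tilde{F}^{(4)}_1$ belong to $\tilde{\cI}_1$; each is a discretisation of the same pressure Poisson equation $F^{(4)}$ but on a different stencil. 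I will therefore form the shift-aligned difference
\begin{equation*}
  P:=\tilde{F}^{(4)}-\sigma_1\sigma_2\,\tilde{F}^{(4)}_1\in\tilde{\cI}_1,
\end{equation*}
where $\sigma_1\sigma_2$ re-centres the compact $\Delta_1$-stencil of $\tilde{F}^{(4)}_1$ onto the grid point $(x_{j+2},y_{k+2})$ that already sits at the centre of the $\Delta_2$-stencil in $\tilde{F}^{(4)}$.

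Second, I will compute the Taylor expansion of $P$ about $(x_{j+2},y_{k+2})$. A direct check shows that after the recentering shift the $f^{(1)}$- and $f^{(2)}$-central differences of $\tilde{F}^{(4)}$ and of $\sigma_1\sigma_2\tilde{F}^{(4)}_1$ become literally the same discrete expressions, so the whole source-term contribution cancels and $P$ reduces to $\Delta_2(p_{j,k})-\sigma_1\sigma_2\Delta_1(p_{j,k})$. Standard Taylor expansions of the two five-point Laplacians (with spacings $2h$ and $h$, respectively) around the common centre give
\begin{equation*}
  \Delta_2 p = \Delta p + \tfrac{h^2}{3}(p_{xxxx}+p_{yyyy})+O(h^4),\qquad
  \sigma_1\sigma_2\Delta_1 p = \Delta p + \tfrac{h^2}{12}(p_{xxxx}+p_{yyyy})+O(h^4),
\end{equation*}
so the $O(1)$ terms cancel and $P\rhd \tfrac{1}{4}(p_{xxxx}+p_{yyyy})$ in the sense of~\eqref{def-imply} with $k=2$.

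Third, to show $p_{xxxx}+p_{yyyy}\notin\cI$, I will exhibit an explicit solution of the Stokes system on which this polynomial does not vanish: take $u=v\equiv 0$, $p=x^4$, $f^{(1)}=4x^3$, $f^{(2)}\equiv 0$. A direct substitution verifies $F^{(1)}=F^{(2)}=F^{(3)}=0$, yet $p_{xxxx}+p_{yyyy}=24\neq 0$; since every element of $\cI$ must vanish on every solution of~\eqref{Stokes}, this rules out $\tfrac{1}{4}(p_{xxxx}+p_{yyyy})$ from $\cI$. Combined with $P\in\tilde{\cI}_1\subseteq\llbracket\tilde{F}\rrbracket$ from the first paragraph, this contradicts~\eqref{s-cond} and so proves the scheme s-inconsistent. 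The main technical obstacle I anticipate is the bookkeeping of the shift $\sigma_1\sigma_2$ that aligns the two stencils and makes the forcing-term parts cancel exactly; the conceptual heart of the argument — that $\tilde{F}^{(4)}$ itself is a hidden consequence of $\tilde{F}^{(1)},\tilde{F}^{(2)},\tilde{F}^{(3)}$ and therefore already sits inside $\tilde{\cI}_1$ — comes for free from Corollary~\ref{corollary}.
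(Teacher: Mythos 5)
Your proof is correct, but it takes a genuinely different route from the paper's. The paper's argument is essentially computational: it invokes the {\sc LDA}-computed Janet/\Gr basis of $\tilde{\cI}_1=\langle \tilde{F}^{(1)},\tilde{F}^{(2)},\tilde{F}^{(3)},\tilde{F}^{(4)}_1\rangle$, which contains three extra elements $\tilde{F}^{(5)},\tilde{F}^{(6)},\tilde{F}^{(7)}$ whose continuous limits $F^{(5)},F^{(6)}$ in~\eqref{incon-eq} are irreducible modulo the differential ideal, and then concludes by the criterion of Theorem~\ref{ThmSC} (the displayed ``proof'' of the proposition itself only records that $\tilde{F}^{(4)}_1\notin\tilde{\cI}$; the s-inconsistency argument proper is the surrounding Gr\"obner-basis discussion). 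You instead exhibit a single explicit witness: since $\tilde F^{(4)}\in\langle\tilde F^{(1)},\tilde F^{(2)},\tilde F^{(3)}\rangle$ (a fact you correctly extract from the proof of Corollary~\ref{corollary}, so you do still lean on that one {\sc LDA} verification), the ideal $\tilde{\cI}_1$ contains two inequivalent discretizations of the pressure Poisson equation, and their shift-aligned difference $P=\tilde F^{(4)}-\sigma_1\sigma_2\tilde F^{(4)}_1$ has the forcing terms cancelling identically and continuous limit $\tfrac14(p_{xxxx}+p_{yyyy})$, which your explicit Stokes solution $u=v=0$, $p=x^4$, $f^{(1)}=4x^3$, $f^{(2)}=0$ shows is not in $\cI$. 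I checked the stencil bookkeeping and the two Laplacian expansions ($h^2/3$ versus $h^2/12$, difference $h^2/4$); they are right, and your obstruction is in fact the same as the paper's, since $p_{xxxx}+p_{yyyy}\equiv F^{(6)}$ modulo $\cI$. What your approach buys is a hand-checkable, largely computation-free certificate of s-inconsistency directly from Definition~\ref{def-3}; what the paper's approach buys is the complete Gr\"obner basis of $\tilde{\cI}_1$ and hence the full list of spurious constraints~\eqref{incon-eq}. One small point worth making explicit: relation~\eqref{def-imply} as literally stated would also allow $P\rhd 0$, so your argument implicitly (and correctly, in line with the paper's own usage in Theorem~\ref{ThmSC}) reads the continuous limit as the first \emph{nonvanishing} Taylor coefficient, which is unique; you should also note that $P\neq 0$ as a difference polynomial, which is clear since the two stencils overlap only at the centre point where the coefficients do not cancel.
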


\begin{proof}
The difference polynomial~\eqref{dpe} does not belong to the difference ideal $\tilde{\cI}$ generated by the polynomial set in Eqs.~\eqref{J-Basis} since $\tilde{F}^{(4)}_1$ is irreducible modulo the ideal $\tilde{\cI}$. This can be shown by the direct computation of the normal form of $\tilde{F}^{(4)}_1$
modulo the Janet basis~\eqref{J-Basis} with the routine {\em InvReduce} of the Maple package {\sc LDA}. \hfill{$\Box$}
\end{proof}

Now let us analyse the set $\{\tilde{F}^{(1)},\tilde{F}^{(2)},\tilde{F}^{(3)},\tilde{F}^{(4)}_1\}$ with respect to  s-consistency. The Janet/\Gr basis of the difference ideal $\tilde{\cI}_1:=\langle \tilde{F}^{(1)},\tilde{F}^{(2)},\tilde{F}^{(3)},\tilde{F}^{(4)}_1 \rangle$ computed with {\sc LDA} shows that $\tilde{\cI}\neq \tilde{\cI}_1$. This basis consists of seven elements. Four of them, $\tilde{F}^{(1)},\tilde{F}^{(2)},\tilde{F}^{(3)},\tilde{F}^{(4)}_1$, imply system~\eqref{InvSys} and the three remaining elements, denoted by $\tilde{F}^{(5)}$, $\tilde{F}^{(6)}$ and $\tilde{F}^{(7)}$, are rather cumbersome difference equations which imply, respectively, the following differential ones
\begin{equation}\label{incon-eq}
\left\lbrace
\begin{array}{rl}
F^{(5)}:=& f^{(1)}_{xxxxx}+f^{(1)}_{xyyyy}+f^{(2)}_{xxxxy}+f^{(2)}_{yyyyy}=0\,,\\[4pt]
F^{(6)}:=& f^{(1)}_{xxx}-f^{(1)}_{xyy}+f^{(2)}_{xxy}-f^{(2)}_{yyy}+2\,p_{yyyy}=0\,,\\[4pt]
\end{array}
\right.
\end{equation}
and $\tilde{F}^{(7)}\rhd F^{(6)}$.

Equations~\eqref{incon-eq} are not consequences of the Stokes equations since the
differential polynomials $F^{(5)}$ and $F^{(6)}$ are irreducible modulo the differential ideal generated by the differential polynomials in Eqs.~\eqref{Stokes}. It follows that there are solutions to the Stokes equations which do not satisfy Eqs.~\eqref{incon-eq}.

\begin{remark}
Equations~\eqref{incon-eq} impose the limitations on the external forces which do not follow from the governing differential equations~\eqref{Stokes}. This is a result of s-inconsistency.
\end{remark}

\section{Modified Stokes Flow}
\label{ModifiedEquation}
In the framework of the method of {\em modified equation}~(cf.~\cite{Moin'10}, Sect.~5.5), a numerical solution of the governing differential system~\eqref{Stokes}, for given external forces $f^{(1)}$ and $f^{(2)}$, should be considered as a set of continuous differentiable functions $\{u,v,p\}$ whose values at the grid points satisfy the  difference scheme~\eqref{J-Basis}. Since the difference equations~\eqref{J-Basis} describe the differential ones~\eqref{InvSys} only approximately, we cannot expect that a continuous solution interpolating the grid values exactly satisfies Eqs.~\eqref{InvSys}. In reality, it satisfies another set of differential equations which we shall call the {\em modified steady Stokes flow} or {\em modified flow} for short.

Generally, the method of modified differential equation uses the representation of difference equations comprising the scheme as infinite order differential equations obtained by replacing the various shift operators in the difference equations by the Taylor series about a grid point. For equations of evolutionary type, the next step is to eliminate all derivatives with respect to the evolutionary variable of order greater than one. This step is done to obtain a kind of canonical form of the modified equation. Then, truncation of the order of the differential representations in the grid steps gives various modified equations (``differential approximations'') of the difference scheme.

As we show, the fact that both equation systems are \Gr bases of the ideals they generate and satisfy the condition~\eqref{implication} of s-consistency allows to develop a constructive procedure for the computation of the modified flow. Since the finite differences in the scheme~\eqref{J-Basis} approximate the partial derivatives occurring in Eqs.~\eqref{InvSys} with accuracy  $\mathcal{O}(h^2)$, it would appear reasonable that the scheme would have the second order of accuracy. For this reason, we restrict ourselves to the computation of the second order modified flow.

The Taylor expansions of the difference polynomials in Eqs.~\eqref{J-Basis} at the grid point $(-h,-h)$ for $\tilde{F}^{(1)},\tilde{F}^{(2)},\tilde{F}^{(3)},\tilde{F}^{(4)}_1$, and at the point $(-2h,-2h)$ for $\tilde{F}^{(4)}$ read
\begin{equation}\label{TaylorExp}
\left\lbrace
\begin{array}{rl}
\tilde{F}^{(1)}:=&  u_{x} + v_{y}+\frac{h^{2} u_{xxx}}{6} + \frac{h^{2} v_{yyy}}{6} + \mathcal{O}(h^4)=0\,,\\[6pt]
\tilde{F}^{(2)}:=&  p_{x} - \frac{1}{\mathrm{Re}}u_{xx} - \frac{1}{\mathrm{Re}}u_{yy}- f^{(1)}+\frac{h^{2} p_{xxx}}{6} - \frac{h^{2} u_{xxxx}}{12\,\mathrm{Re}} \\[6pt]
&  - \frac{h^{2} u_{yyyy}}{12\,\mathrm{Re}} +\mathcal{O}(h^4)=0\,,\\[6pt]
\tilde{F}^{(3)}:=& p_{y} - \frac{1}{\mathrm{Re}}v_{xx} - \frac{1}{\mathrm{Re}}v_{yy}- {f^{(2)}} +\frac{h^{2} p_{yyy}}{6} - \frac{h^{2} v_{xxxx}}{12\,\mathrm{Re}} \\[6pt]
& - \frac{h^{2} v_{yyyy}}{12\,\mathrm{Re}} + \mathcal{O}(h^4)=0\,,\\[6pt]
\tilde{F}^{(4)}:=&  p_{xx} + p_{yy} -  f^{(1)}_{x} - f^{(2)}_{y} - \frac{h^{2}f^{(1)}_{xxx}}{6} - \frac{h^{2}f^{(2)}_{yyy}}{6}\\[6pt]
& + \frac{h^{2} p_{xxxx}}{3} + \frac{h^{2} p_{yyyy}}{3} + \mathcal{O}(h^4)=0
\,,
\end{array}
\right.
\end{equation}
where the terms of order $h^2$ are written explicitly. The calculation of the right-hand sides in Eq.\eqref{TaylorExp} as well as the computation of the expressions given below was done with the use of freely available Python library {\sc SymPy} ({\tt http://www.sympy.org/}) for symbolic mathematics.

\begin{remark}\label{rem-evenh}
The Taylor expansions of the s-consistent difference scheme~\eqref{J-Basis} and of the s-inconsistent scheme $\{\tilde{F}^{(1)},\tilde{F}^{(2)},\tilde{F}^{(3)},\tilde{F}^{(4)}_1\}$ over the chosen grid points contain only the even powers of $h$. It follows immediately from the fact that all the finite differences occurring in the equations of both schemes are the central difference approximations of the partial derivatives occurring in~\eqref{InvSys}.
\end{remark}
Furthermore, we reduce the terms of order $h^2$ in the right-hand sides of~\eqref{TaylorExp} modulo the differential Janet/\Gr basis~\eqref{J-Basis}. This reduction will give us a {\em canonical form} of the second order modified flow, since given a \Gr basis, the {\em normal form} of a polynomial modulo this basis is uniquely defined~(cf.~\cite{AdamsLoustaunau'94}, Sect.~2.1). The normal form can be computed with the command {\em InvReduce} using the Maple package {\sc Janet}.

Thus, the Taylor expansion of the difference polynomials yields the {\em second order modified Stokes flow} as follows:
\begin{equation}\label{TaylorExpRed}
\left\lbrace
\begin{array}{rl}
\tilde{F}^{(1)}:=&  u_{x} + v_{y}+\frac{h^{2}\,\mathrm{Re}f^{(2)}_{y}}{6}
 - \frac{h^{2}\,\mathrm{Re}\, p_{yy}}{6} + \frac{h^{2} v_{yyy}}{3} + \mathcal{O}(h^4)=0\,,\\[6pt]
\tilde{F}^{(2)}:=&  p_{x} + \frac{1}{\mathrm{Re}}v_{xy} - \frac{1}{\mathrm{Re}}u_{yy}- f^{(1)}+\frac{h^{2}f^{(1)}_{xx}}{6} + \frac{h^{2}f^{(1)}_{yy}}{4} + \frac{h^{2}f^{(2)}_{xy}}{4} \\[6pt]
& - \frac{h^{2} p_{xyy}}{2} + \frac{h^{2} u_{yyyy}}{6\,\mathrm{Re}} +\mathcal{O}(h^4)=0\,,\\[6pt]
\tilde{F}^{(3)}:=& p_{y} - \frac{1}{\mathrm{Re}}v_{xx} - \frac{1}{\mathrm{Re}}v_{yy}- {f^{(2)}} - \frac{h^{2}f^{(1)}_{xy}}{12} + \frac{h^{2} f^{(2)}_{xx} }{12} - \frac{h^{2}f^{(2)}_{yy}}{6} \\[6pt]
& + \frac{h^{2} p_{yyy}}{3} - \frac{h^{2} v_{yyyy}}{6\,\mathrm{Re}}  + \mathcal{O}(h^4)=0\,,\\[6pt]
\tilde{F}^{(4)}:=&  p_{xx} + p_{yy} -  f^{(1)}_{x} - f^{(2)}_{y} + \frac{h^{2}f^{(1)}_{xxx}}{6} - \frac{h^{2} f^{(1)}_{xyy}}{3}  \\[6pt]
& + \frac{h^{2}f^{(2)}_{xxy}}{3} - \frac{h^{2} f^{(2)}_{yyy}}{2}  + \frac{2h^{2} p_{yyyy}}{3}  + \mathcal{O}(h^4)=0
\,.
\end{array}
\right.
\end{equation}

\begin{remark}
Note that the symmetry under the swap transformation~\eqref{SymTrans} that holds in Eqs.~\eqref{TaylorExp} does not hold in Eqs.~\eqref{TaylorExpRed}. This symmetry breaking is a typical effect of the application of the \Gr reduction to symmetric systems and caused by the non-symmetry of the term ordering.
\end{remark}

As we know, Stokes flow~\eqref{Stokes} satisfies the integrability condition~\eqref{IntCon} which we rewrite as
\begin{equation}\label{IntRel}
F^{(2)}_x + F^{(3)}_y + \frac{1}{\mathrm{Re}}\left( F^{(1)}_{xx} + F^{(1)}_{yy}\right) - F^{(4)} = 0\,.
\end{equation}

Substitution of the Taylor expansions~\eqref{TaylorExpRed} into the equality~\eqref{IntRel} shows that the sum of the second-order terms explicitly written in formulae~\eqref{TaylorExpRed} is equal to zero.
The following proposition shows that this is a consequence of the s-consistency of the scheme.

\begin{proposition}\label{S-cons}
Given a uniform and orthogonal solution grid with a spacing $h$, a w-consistent difference scheme for Eqs.~\eqref{InvSys} is s-consistent only if its Taylor expansion based on the central-difference formulas for derivatives and reduced modulo system~\eqref{InvSys}, after its substitution into the left-hand side of the equality~\eqref{IntRel}, vanishes for  every order in $h^2$.
\end{proposition}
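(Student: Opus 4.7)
The plan is to lift the differential identity~\eqref{IntRel} to the discrete level and then unpack s-consistency order by order.

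First I would choose central-difference operators $\bar D_x,\bar D_y,\bar D_{xx},\bar D_{yy}$ whose Taylor expansions contain only even powers of $h$ and whose leading term is the corresponding differential operator. Forming
\begin{equation*}
\tilde E \;:=\; \bar D_x\tilde F^{(2)} + \bar D_y\tilde F^{(3)} + \tfrac{1}{\mathrm{Re}}\bigl(\bar D_{xx}\tilde F^{(1)} + \bar D_{yy}\tilde F^{(1)}\bigr) - \tilde F^{(4)},
\end{equation*}
one immediately has $\tilde E\in\tilde{\mathcal{I}}$. Letting $T^{(i)}=F^{(i)}+h^{2}G^{(i)}_{1}+h^{4}G^{(i)}_{2}+\cdots$ denote the reduced Taylor series of $\tilde F^{(i)}$ (only even powers by Remark~\ref{rem-evenh}), the Taylor expansion of $\tilde E$ decomposes as
\begin{equation*}
\tilde E_{\mathrm{Taylor}} \;=\; E(h) \;+\; R(h),
\end{equation*}
where $E(h):=T^{(2)}_x+T^{(3)}_y+\tfrac{1}{\mathrm{Re}}(T^{(1)}_{xx}+T^{(1)}_{yy})-T^{(4)}$ is exactly the substitution described in the statement and $R(h)$ collects the central-difference truncation error applied to the $T^{(i)}$.

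Next I would work in the differential quotient $\mathcal{R}/\mathcal{I}$. At order $h^{0}$, $E$ vanishes by the integrability identity \eqref{IntCon}. At order $h^{2}$ the remainder $R$ contributes only linear combinations of derivatives of the generators $F^{(i)}$, so $[R_{1}]=0$. Theorem~\ref{ThmSC} forces the leading nonzero Taylor coefficient of $\tilde E\in\tilde{\mathcal{I}}$ to lie in $\mathcal{I}$, hence to vanish in $\mathcal{R}/\mathcal{I}$. Since the first potentially nonzero coefficient sits at order $h^{2}$ and equals $[E_{1}]+[R_{1}]=[E_{1}]$, this gives $[E_{1}]=0$ in $\mathcal{R}/\mathcal{I}$, which is vanishing modulo the Janet basis of~\eqref{InvSys}.

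To reach arbitrary orders I would induct on $k_{0}$: assuming $[E_{m}]=0$ for $m<k_{0}$, choose for each such $m$ a difference polynomial $\tilde p_{m}\in\tilde{\mathcal{I}}$ whose Taylor series realises the $h^{2m}$ mismatch inside $R$. Such $\tilde p_{m}$ exist because each $E_{m}\in\mathcal{I}$ admits a differential Janet representation that can be discretised back through the generators $\tilde F^{(i)}$. The adjusted element $\tilde E-\sum_{m<k_{0}}h^{2m}\tilde p_{m}$ still belongs to $\tilde{\mathcal{I}}$ and has $h^{2k_{0}}$-order Taylor coefficient equal to $E_{k_{0}}$ modulo $\mathcal{I}$; Theorem~\ref{ThmSC} then yields $[E_{k_{0}}]=0$, closing the induction. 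The main obstacle is precisely this bookkeeping step: at high orders the central-difference remainder couples the correction operators $\bar D_{\alpha}-\partial_{\alpha}$ with the generally nontrivial lower coefficients $G^{(i)}_{m}$, and the $\tilde p_{m}$ must be engineered so that these couplings are absorbed into $\tilde{\mathcal{I}}$ without disturbing the orders already settled. Carrying out this $h$-adic echelonisation inside $(\mathcal{R}/\mathcal{I})[[h^{2}]]$ is the delicate ingredient; once it is performed the conclusion is immediate.
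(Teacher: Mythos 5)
Your proposal follows the same core strategy as the paper's proof: build the discrete counterpart of the integrability relation~\eqref{IntRel} from the scheme's polynomials (your $\tilde E$ is the paper's $\tilde G_0^{(m)}$ in~\eqref{accuraceIC}), observe that it lies in the perfect difference ideal, Taylor-expand, and use s-consistency to force the leading coefficient into the differential ideal, order by order. The genuine difference is how the truncation error of the difference operators is prevented from contaminating the order being examined. You fix one set of operators, split the expansion as $E(h)+R(h)$, and argue in $\mathcal{R}/\mathcal{I}$ that $R$ contributes only derivatives of the generators; to get past order $h^2$ you must then subtract correctors $\tilde p_m\in\tilde{\mathcal{I}}$ realising the lower-order ideal-valued coefficients, and the existence of these correctors (your ``$h$-adic echelonisation'') is asserted rather than proved --- discretising a differential representation of $E_m$ through the $\tilde F^{(i)}$ reintroduces $O(h^{2})$ tails that themselves need correcting, so the regress has to be organised with some care. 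The paper sidesteps this entirely by using, for each target order, central-difference operators $D^{(m)}$ of accuracy $h^{2m}$: in $\tilde G_0^{(m)}$ every coefficient below order $h^{2m}$ is then exactly the clean combination $\partial_x r_l^{(2)}+\partial_y r_l^{(3)}+\tfrac{1}{\mathrm{Re}}\bigl(\partial_{xx}r_l^{(1)}+\partial_{yy}r_l^{(1)}\bigr)$ with no operator remainder, and the leading-coefficient argument of Theorem~\ref{ThmSC} applies directly at each stage (relations \eqref{G1}--\eqref{GK}). Adopting that family of operators makes your decomposition and the corrector construction unnecessary, which is what keeps the paper's version short. Both arguments, yours and the paper's, ultimately rely on the lower-order coefficients actually vanishing (not merely lying in $\mathcal{I}$) so that the ``leading coefficient'' reasoning can advance to the next order; the paper records this as the equalities in \eqref{G2}--\eqref{GK}, and your $[E_m]=0$ in the quotient is the same assertion, so on this point your proof is no less rigorous than the published one.
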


\begin{proof} Let $\tilde{G}:=\{\tilde{G}^{(1)},\tilde{G}^{(2)},\tilde{G}^{(3)},\tilde{G}^{(4)}\}$ be a set of s-consistent difference approximations to the differential polynomials~$F^{(1)},F^{(2)},F^{(3)},F^{(4)}$ in the Janet/\Gr basis~\eqref{InvSys}. The w-consistency of $\tilde{G}$ implies the central difference Taylor expansion
\begin{equation}\label{ExpG}
   \tilde{G}^{(i)}=F^{(i)}+\sum_{m=1}^\infty h^{2m}r^{(i)}_m\,,\quad r^{(i)}_m\in \Q(\mathrm{Re})[u,v,p,f^{(1)},f^{(2)}]\quad (i=1\div4)\,.
\end{equation}
We consider the family of difference polynomials $(m\in \N_{\geq 1})$
\begin{equation}\label{accuraceIC}
\tilde{G}_0^{(m)}:=D_1^{(m)}\tilde{G}^{(2)} + D_2^{(m)}\tilde{G}^{(3)} + \frac{1}{\mathrm{Re}}\left( D_{1,1}^{(m)}\tilde{G}^{(1)} + D_{2,2}^{(m)}\tilde{G}^{(1)}\right) - \tilde{G}^{(4)}
\end{equation}
with the central-difference operators $D_1^{(m)}$, $D_2^{(m)}$, $D_{1,1}^{(m)}$, $D_{2,2}^{(m)}$ approximating the partial differential operators $\partial_x$,  $\partial_y$, $\partial_{xx}$, $\partial_{yy}$ with accuracy $h^{2m}$. Apparently, $\tilde{G}_0^{(m)}$  belongs to the perfect difference ideal generated by $\tilde{G}$:
\[
\left(\forall m\in \N_{\geq 1}\right)\ \  [\,\tilde{G}_0^{(m)}\in \llbracket \tilde{G}\rrbracket\,]\,.
\]
These difference operators are composed of the translations~\eqref{shifts}. For example,
\[
       D_{i}^{(1)}:=\frac{\sigma_{i}-\sigma_{i}^{-1}}{2h}\,,\quad D_{i,i}^{(1)}:=\frac{\sigma_{i}-2+\sigma_{i}^{-1}}{h^2}\,,\quad i\in \{1,2\}
\]
and
\[
     D_{i}^{(2)}:=\frac{-\sigma_{i}^2+8\sigma_{i}-8\sigma_{i}^{-1}+\sigma_{i}^{-2}}{12h}\,,\quad D_{i,i}^{(2)} :=\frac{-\sigma_{i}^2+16\sigma_{i}-30+16\sigma_{i}^{-1}-\sigma_{i}^{-2}}{12h^2}
\]
with $\sigma_{1}^{-1}\circ u(j,\,k)=u(j-1,\,k),\,\sigma_2^{-1}\circ u(j,\,k)=u(j,\,k-1)$, etc., $\sigma_i^2=\sigma_i\circ \sigma_i$ and $\sigma_{i}^{-2}=\sigma_{i}^{-1}\circ \sigma_{i}^{-1}$.

From Eqs.~\eqref{ExpG} and \eqref{accuraceIC}, we obtain
\begin{eqnarray}
\tilde{G}^{(1)}_0 &=& F^{(2)}_x + F^{(3)}_y + \frac{1}{\mathrm{Re}}\left( F^{(1)}_{xx} + F^{(1)}_{yy}\right) - F^{(4)} + \mathcal{O}(h^2)\,,\nonumber \\
& \Rightarrow & F^{(2)}_x + F^{(3)}_y + \frac{1}{\mathrm{Re}}\left( F^{(1)}_{xx} + F^{(1)}_{yy}\right) - F^{(4)}=0\,, \label{G1} \\
\tilde{G}^{(2)}_0 &=& h^2\left(\partial_x r^{(2)}_1+\partial_y r^{(3)}_1+\frac{1}{\mathrm{Re}}\left( \partial_{xx}r^{(1)}_1 + \partial_{yy}r^{(1)}_1\right)\right)+\mathcal{O}(h^4) \nonumber\\
& \Rightarrow & \partial_x r^{(2)}_1+\partial_y r^{(3)}_1+\frac{1}{\mathrm{Re}}\left( \partial_{xx}r^{(1)}_1 + \partial_{yy}r^{(1)}_1\right)=0 \,,  \label{G2}\\
&\vdots&\nonumber\\
\tilde{G}^{(k)}_0 &=& h^{2k}\left(\partial_x r^{(1)}_k+\partial_y r^{(3)}_k+\frac{1}{\mathrm{Re}}\left( \partial_{xx}r^{(1)}_k + \partial_{yy}r^{(2)}_k\right)\right)+\mathcal{O}(h^{2k+2}) \nonumber\\
& \Rightarrow & \partial_x r^{(2)}_k+\partial_y r^{(3)}_k+\frac{1}{\mathrm{Re}}\left( \partial_{xx}r^{(1)}_k + \partial_{yy}r^{(1)}_k\right)=0 \,,  \dots\,.\label{GK}
\end{eqnarray}

The implication in Eq.~\eqref{G2} follows from the fact that the normal form of the differential polynomial~\eqref{G2} modulo Eqs.~\eqref{InvSys}, if it is nonzero, does not belong to the differential ideal generated by the polynomials in \eqref{InvSys} that contradicts the s-consistency of $\tilde{G}$. Because of the same argument, the equality~\eqref{GK} holds for any $k$.  \hfill{$\Box$}
\end{proof}

\begin{corollary}\label{CorSC}
A w-consistent difference scheme for system~\eqref{InvSys} is s-consistent if and only if its set of polynomials is a difference Janet/\Gr basis for the POT ranking~\eqref{ranking}.
\end{corollary}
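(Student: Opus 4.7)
The plan is to derive both implications from Theorem~\ref{ThmSC}, exploiting the fact that $\{F^{(1)},\dots,F^{(4)}\}$ in~\eqref{InvSys} is itself the minimal reduced Janet/\Gr basis of $\langle F\rangle$ for the differential POT ranking~\eqref{ranking1}, and that the difference ranking~\eqref{ranking} was chosen as its discrete counterpart. Overall, s-consistency is reduced to a leader-matching condition between the differential and difference settings.

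The implication ($\Leftarrow$) is essentially a direct specialization of Theorem~\ref{ThmSC}. Assuming the w-consistent set $\tilde{F}=\{\tilde{F}^{(1)},\dots,\tilde{F}^{(m)}\}$ is already a Janet/\Gr basis of $\tilde{\mathcal{I}}$ for~\eqref{ranking}, condition~\eqref{s-consistency} requires only that each $\tilde{F}^{(i)}$ imply some element of $\langle F\rangle$, which is exactly the w-consistency hypothesis $\tilde{F}^{(i)}\rhd F^{(i)}\in\langle F\rangle$ supplied by Definition~\ref{w-consistency}.

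For the converse, I would argue by contradiction. Suppose $\tilde{F}$ is s-consistent but not a Janet/\Gr basis, and let $\tilde{G}$ be the reduced Janet/\Gr basis of $\tilde{\mathcal{I}}$. Then there exists $\tilde{g}\in\tilde{G}$ whose leader $\lm(\tilde{g})$ is not a multiple of any $\lm(\tilde{F}^{(i)})$. Theorem~\ref{ThmSC} furnishes $f\in\langle F\rangle$ with $\tilde{g}\rhd f$; writing $\tilde{g}=h^{k}f+\mathcal{O}(h^{k+1})$, the next step is to show that under the compatibility of~\eqref{ranking1} and~\eqref{ranking} with the continuous-limit map, $\lm(f)$ for the differential ranking corresponds to $\lm(\tilde{g})$ for the difference ranking. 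Since $\{F^{(i)}\}$ is a differential Janet basis of $\langle F\rangle$, $\lm(f)$ must be divisible by some $\lm(F^{(i)})$, and transporting this divisibility back along the correspondence yields that $\lm(\tilde{g})$ is divisible by $\lm(\tilde{F}^{(i)})$, contradicting the construction of $\tilde{G}$.

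The main obstacle is this leader-correspondence step, particularly when the Taylor expansion of $\tilde{g}$ has leading order $k\geq 1$ and $f$ is obtained only after formally dividing out $h^k$, since the discrete leader of $\tilde{g}$ may then correspond to a pair of terms that have cancelled at the continuous level. To handle this I would first replace $\tilde{g}$ by its normal form modulo $\tilde{F}$ (nonzero by irreducibility) and argue in the spirit of Proposition~\ref{S-cons}: by combining the $\tilde{F}^{(i)}$ with central-difference operators of increasing order, one produces successive approximations to $\tilde{g}$ whose continuous coefficients at every order in $h^{2}$ reduce to zero modulo $\langle F\rangle$---a chain of implications analogous to~\eqref{G1}--\eqref{GK}---thus forcing $\lm(\tilde{g})$ to be caught by some $\lm(\tilde{F}^{(i)})$ after all. \hfill$\Box$
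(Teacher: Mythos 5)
Your ``\Gr basis $\Rightarrow$ s-consistent'' direction coincides with the paper's: take the scheme itself as the basis $\tilde{G}$ in Theorem~\ref{ThmSC} and invoke w-consistency together with $F^{(4)}\in{\cI}$ (via~\eqref{IntCon}). That half is fine.

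The converse is where the proposal breaks down. Your argument hinges on a ``leader-correspondence'' between $\lm(\tilde{g})$ and $\lm(f)$ under the continuous limit $\tilde{g}\rhd f$, and you correctly flag this as the main obstacle --- but the obstacle is fatal rather than technical. The continuous-limit map does not transport divisibility of leading monomials in either direction: a difference polynomial whose difference leader involves a shifted $u$ can have a continuous limit containing no velocities at all, because the $u$-terms cancel at leading order in $h$; exactly this happens for the extra basis elements $\tilde{F}^{(5)},\tilde{F}^{(6)},\tilde{F}^{(7)}$ of the s-inconsistent scheme, whose limits~\eqref{incon-eq} involve only $p$ and $f^{(1,2)}$. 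So from ``$\lm(f)$ is divisible by some $\lm(F^{(i)})$'' you cannot conclude anything about $\lm(\tilde{g})$, and the contradiction never materializes. Your fallback --- ``argue in the spirit of Proposition~\ref{S-cons}'' --- gestures at the right tool but omits the one structural fact that makes the paper's proof work: for the POT ranking~\eqref{ranking} and the leader pattern of~\eqref{InvSys} (leaders $u_x$, $u_{yy}$, $v_{xx}$, $p_{xx}$), a w-consistent four-element scheme has exactly \emph{one} difference $S$-polynomial, namely the discrete integrability condition~\eqref{accuraceIC}. Proposition~\ref{S-cons} --- the chain~\eqref{G1}--\eqref{GK}, which is precisely where s-consistency enters --- shows that this single $S$-polynomial reduces to zero modulo the scheme, so Buchberger's criterion applies directly and no leader correspondence is ever needed. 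Without the ``only one $S$-polynomial'' observation, your induction has nothing to run on: an arbitrary element $\tilde{g}$ of the reduced \Gr basis of $\tilde{\cI}$ is not a priori expressible as a central-difference combination of the $\tilde{F}^{(i)}$, so the successive approximations you propose to build cannot be formed.
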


\begin{proof}
``$\Leftarrow$'' Because of our choice~\eqref{ranking} of the ranking and the structure~\eqref{InvSys}, differential Janet/\Gr basis with the underlined leaders, a w-consistent difference scheme composed of four  difference polynomials $\{\tilde{G}^{(1)},\tilde{G}^{(2)},\tilde{G}^{(3)},\tilde{G}^{(4)}\}$ has the only difference $S$-polynomial of the form~\eqref{accuraceIC} which approximates the left-hand side of the differential integrability condition~\eqref{IntRel}. Together with the Taylor expansion~\eqref{ExpG}, the relations~\eqref{G1}--\eqref{GK} imply the reduction of S-polynomial~\eqref{accuraceIC} to zero modulo $\{\tilde{G}^{(1)},\tilde{G}^{(2)},\tilde{G}^{(3)},\tilde{G}^{(4)}\}$. Thus, the scheme is a Janet/\Gr basis.

``$\Rightarrow$'' If a w-consistent set $\{\tilde{G}^{(1)},\tilde{G}^{(2)},\tilde{G}^{(3)},\tilde{G}^{(4)}\}$ is a Janet/\Gr basis, then by Theorem~\ref{ThmSC} it is s-consistent.  \hfill{$\Box$}
\end{proof}

We illustrate Proposition~\ref{S-cons} and Corollary~\ref{CorSC} by the s-inconsistent difference scheme $\{\tilde{F}^{(1)}_1,\tilde{F}^{(2)}_1,\tilde{F}^{(3)}_1,\tilde{F}^{(4)}_1\}$ of Section~\ref{sec:consistency} where  the first three difference equations coincide with those of the system~\eqref{J-Basis},
\[
  \tilde{F}^{(i)}_1 =\tilde{F}^{(i)} \quad (i=1,2,3)\,,
\]
and $\tilde{F}^{(4)}_1$ is given by Eq.~\eqref{dpe}. Because of the distinction of the last equation from $\tilde{F}^{(4)}$ in~\eqref{J-Basis}, the reduced Taylor expansions of equations $\tilde{F}^{(1)}_1=0$ and  $\tilde{F}^{(4)}_1=0$ are different from $\tilde{F}^{(1)}=0$ and $\tilde{F}^{(4)}=0$ in system~\eqref{TaylorExpRed}:
\begin{equation}\label{TaylorExpRed1}
\left\lbrace
\begin{array}{rl}
\tilde{F}^{(1)}_1:=&  u_{x} + v_{y}
 - \frac{h^{2}\,\mathrm{Re}\, p_{yy}}{6} + \frac{h^{2} v_{yyy}}{3} + \mathcal{O}(h^4)=0\,,\\[6pt]
\tilde{F}^{(2)}_1:=&  p_{x} + \frac{1}{\mathrm{Re}}v_{xy} - \frac{1}{\mathrm{Re}}u_{yy}- f^{(1)}+\frac{h^{2}f^{(1)}_{xx}}{6} + \frac{h^{2}f^{(1)}_{yy}}{4} + \frac{h^{2}f^{(2)}_{xy}}{4} \\[6pt]
& - \frac{h^{2} p_{xyy}}{2} + \frac{h^{2} u_{yyyy}}{6\,\mathrm{Re}} +\mathcal{O}(h^4)=0\,,\\[6pt]
\tilde{F}^{(3)}_1:=& p_{y} - \frac{1}{\mathrm{Re}}v_{xx} - \frac{1}{\mathrm{Re}}v_{yy}- {f^{(2)}} - \frac{h^{2}f^{(1)}_{xy}}{12} + \frac{h^{2} f^{(2)}_{xx} }{12} - \frac{h^{2}f^{(2)}_{yy}}{6} \\[6pt]
& + \frac{h^{2} p_{yyy}}{3} - \frac{h^{2} v_{yyyy}}{6\,\mathrm{Re}}  + \mathcal{O}(h^4)=0\,,\\[6pt]
\tilde{F}^{(4)}_1:=&  p_{xx} + p_{yy} -  f^{(1)}_{x} - f^{(2)}_{y} - \frac{h^{2}f^{(1)}_{xxx}}{12} - \frac{h^{2} f^{(1)}_{xyy}}{12}  \\[6pt]
& + \frac{h^{2}f^{(2)}_{xxy}}{12} - \frac{h^{2} f^{(2)}_{yyy}}{4}  + \frac{h^{2} p_{yyyy}}{6}  + \mathcal{O}(h^4)=0
\,.
\end{array}
\right.
\end{equation}
If we expand $\tilde{F}^i_1$ $(i=1\div4)$ up to the fourth order terms in $h$ and substitute the obtained expansions into the left-hand side of the integrability condition~\eqref{IntRel}, then we obtain
\begin{equation}\label{incons-ic}
\frac{h^{2}f^{(1)}_{xxx}}{4} - \frac{h^{2}f^{(1)}_{xyy}}{4} + \frac{h^{2}f^{(2)}_{xxy}}{4} - \frac{h^{2}f^{(2)}_{yyy}}{4} + \frac{h^{2} p_{{yyyy}}}{2}+\mathcal{O}(h^4).
\end{equation}

Expression~\eqref{incons-ic} contains terms of second order in $h$. Up to the factor 4, the sum of these terms is the differential polynomial $F^{(6)}$ in Eqs.~\eqref{incon-eq}. Thus, the presence of the second-order terms in \eqref{incons-ic} is intimately related to the s-inconsistency of~\eqref{TaylorExpRed} with governing Stokes equations~\eqref{Stokes}. It is clear that the PDE system~\eqref{TaylorExpRed1} cannot be considered as a modified Stokes flow.

\section{Numerical Simulation}
\label{sec:Numerics}

In this section, we present a numerical simulation in order to experimentally validate the s-consistent difference scheme~\eqref{J-Basis} for which we constructed the modified Stokes flow~\eqref{TaylorExpRed}. For that, we suppose that the Stokes system (\ref{Stokes}) is defined in the rectangular domain which is discretized in the $x$- and $y$-directions by means of equidistant points. We simulate a fluid flow through porous media which is often mainly caused by the viscous forces, so that its modeling using the Stokes system (\ref{Stokes}) is reasonable; see Fig.~\ref{PorousMediaFlow}. Such a setup has many practical applications in the field of petroleum engineering \cite{Fancher'33}.

\begin{figure}[h]
    \centering
    \includegraphics[width=\textwidth]{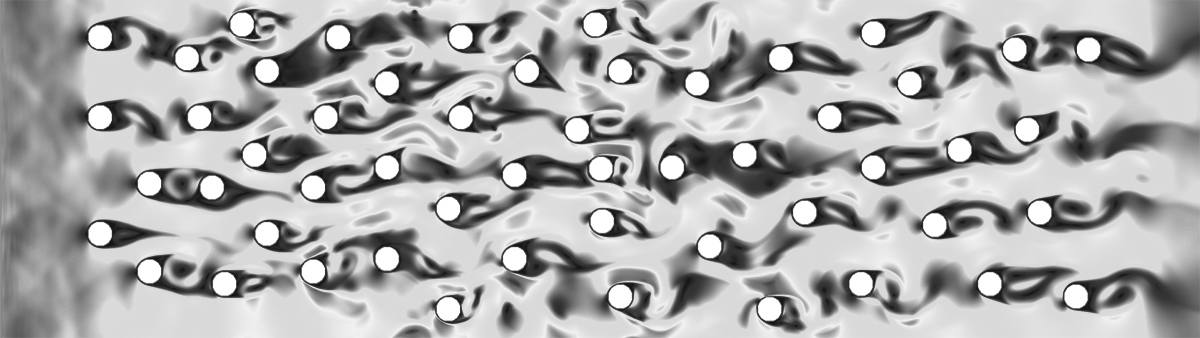}
    \caption{Visualization of  the simulation of a fluid flow through porous media using the s-consistent difference scheme~\eqref{J-Basis}.}
    \label{PorousMediaFlow}
\end{figure}

We measure the maximum relative error of the average velocities compared to a ground truth result obtained by computing with extremely tiny $h$-values. From several simulations with varying $h$-values, we can follow that a maximum relative error of more than 15\% in the velocity space compared to the ground truth should not be tolerated in order to ensure for a sufficient degree of global accuracy. Using this restriction we evaluate the performance of the s-consistent difference scheme~\eqref{J-Basis} against the popular classic marker-and-cell (MAC) method \cite{Harlow'65}. We observe that compared to MAC, using the scheme~\eqref{J-Basis}, one can simulate with around a factor of $1.7$, i.e.,~with significantly larger $h$-values and, at the same time, keep the relative error below the 15\%-bar. Moreover, we observe that this factor is only slightly dependent on the Reynolds number.

\section{Conclusion}
\label{Conclusion}
For the two-dimensional incompressible steady Stokes flow~\eqref{Stokes} and a regular Cartesian solution grid, we  presented a computer algebra-based approach in order to derive the s-consistent difference scheme~\eqref{J-Basis} for which we constructed the modified Stokes flow~\eqref{TaylorExpRed}. It shows that the generated scheme has order $\mathcal{O}(h^2)$.

Our computational procedure for the derivation of the modified Stokes flow is based on a combination of differential and difference \Gr basis techniques. The first is applied to the governing Stokes equations~\eqref{Stokes} to complete them to the involution form~\eqref{InvSys}  incorporating the pressure Poisson equation $F^{(4)}$, and to verify the s-consistency of the scheme by applying the criterion of s-consistency (Theorem~\ref{ThmSC}) which is fully algorithmic for linear systems of PDEs. The difference \Gr bases technique is used for the derivation of the scheme on the chosen grid by means of difference elimination.

In addition, we used both techniques to construct a modified Stokes flow~\eqref{TaylorExpRed}. Its structure as well as that of the scheme depends on the used difference ranking. We experimented with several rankings and finally preferred the POT ranking satisfying~\eqref{ranking1} for the differential case and~\eqref{ranking} for the difference case as the best suited. To perform the related computations we used the Maple packages {\sc Janet}~\cite{Maple-Janet'03} and {\sc LDA}~\cite{GR'12}.

Since our difference scheme~\eqref{J-Basis} for ranking~\eqref{ranking} is obtained from its first three equations $\{\tilde{F}^{(1)},\tilde{F}^{(2)},\tilde{F}^{(3)}\}$ by constructing the difference Janet/\Gr basis (see Remark~\ref{Rem2}), it is interesting to check via the \Gr bases whether there are approximations of the continuity equation $F^{(1)}$ in the difference ideal generated by $\tilde{F}:=\{\tilde{F}^{(2)},\tilde{F}^{(3)},\tilde{F}^{(4)}\}$. In the case of existence of such approximations they might be used for the numerical study of Stokes flow in the velocity-pressure formulation. However, the computation with {\sc LDA} shows that the discrete version of $F^{(1)}$ is not a consequence of $\tilde{F}$. Thus, in the velocity-pressure formulation one has to add information on the continuity equation to $\tilde{F}$ via the corresponding boundary condition (cf.~\cite{Petersson'01}).

\section*{Acknowledgements}

The authors are grateful to Daniel Robertz for his help with respect to the use of the packages {\sc Janet} and {\sc LDA} and to the anonymous referees for their suggestions. This work has been partially supported by the King Abdullah University of Science and Technology (KAUST baseline funding), the Russian Foundation for Basic Research (16-01-00080) and the RUDN University Program (5-100).



\end{document}